\documentclass[hidelinks,onefignum,onetabnum]{siamart251216}

\usepackage{mathrsfs}
\usepackage{amsfonts}
\usepackage{amssymb}
\usepackage{booktabs}
\usepackage{multirow}
\usepackage{array}
\usepackage{graphicx}
\usepackage{epstopdf}
\usepackage{algpseudocode}
\usepackage{amsopn}

\ifpdf
  \DeclareGraphicsExtensions{.eps,.pdf,.png,.jpg}
\else
  \DeclareGraphicsExtensions{.eps}
\fi

\newsiamremark{assumption}{Assumption}
\crefname{assumption}{assumption}{assumptions}
\Crefname{assumption}{Assumption}{Assumptions}
\newsiamremark{example}{Example}
\crefname{example}{example}{examples}
\Crefname{example}{Example}{Examples}

\algrenewcommand\algorithmicrequire{\textbf{Input:}}
\algrenewcommand\algorithmicensure{\textbf{Output:}}

\headers{Mixed Precision Iterative Refinement for CAREs}{J. Zhang and Y. Zhou}

\title{Newton-Based Mixed Precision Iterative Refinement for Large-Scale Sparse Continuous-Time Algebraic Riccati Equations\thanks{\funding{This work was supported by the National Key Research and Development Program of China (Grant No.~2023YFB3001604).}}}

\author{Juan Zhang\thanks{Key Laboratory of Intelligent Computing and Information Processing of Ministry of Education, 
Hunan Key Laboratory for Computation and Simulation in Science and Engineering, 
School of Mathematics and Computational Science, Xiangtan University, Xiangtan 411105, Hunan, China. 
Corresponding author (\email{zhangjuan@xtu.edu.cn}).}
\and Yang Zhou\footnotemark[2]}

\ifpdf
\hypersetup{
  pdftitle={Newton-Based Mixed Precision Iterative Refinement for Large-Scale Sparse Continuous-Time Algebraic Riccati Equations},
  pdfauthor={Juan Zhang and Yang Zhou}
}
\fi

\begin{document}

\maketitle

\begin{abstract}
We propose a Newton-based mixed precision iterative refinement framework for solving large-scale sparse continuous-time algebraic Riccati equations (CAREs). The framework computes the initial approximation and the inner Lyapunov correction equations in lower precision, while evaluating residuals and updating the solution in higher precision. 
To handle indefinite residuals and Newton correction terms in low-rank form, we introduce factor decomposition procedures with truncation strategies that preserve positive semidefiniteness and control rank growth.
A first-order rounding error analysis derives a residual recurrence for the refinement process and relates stable mixed precision refinement to a Lyapunov operator conditioning threshold governed by the unit roundoff of the lower precision inner solves.
We then present a concrete ADI-based realization, using NLR-ADI for the initial CARE approximation and LR-ADI for the inner Lyapunov correction equations. Compared with dense Lyapunov correction implementations, this realization reduces the main computations to shifted linear solves and low-rank factor operations, and we provide a solver-dependent complexity analysis.
Numerical experiments on dense CARE over a range of condition numbers illustrate the conditioning effect described by the error analysis, and experiments on large-scale sparse CAREs show that the mixed precision framework is faster than the full double precision implementation while maintaining the same level of accuracy.
\end{abstract}

\begin{keywords}
Mixed precision iterative refinement; Continuous-Time Algebraic Riccati Equation; low-rank factorization; Sparsity exploitation; Rounding error analysis
\end{keywords}

\begin{MSCcodes}
65F45; 65F50; 65G50; 65Y20
\end{MSCcodes}

\section{Introduction}
We consider the large-scale continuous-time algebraic Riccati equations
\begin{equation}\label{CARE}
A^T X + X A - XB B^T X + C^T C = 0,
\end{equation}
where $A \in \mathbb{R}^{n \times n}$ is a sparse matrix, $B \in \mathbb{R}^{n \times m}, C \in \mathbb{R}^{p \times n}$ are low-rank matrices satisfy $p,m \ll n$. Such equations arise extensively in optimal control \cite{zuiyoukz2001}, model order reduction \cite{mxjj1983,mxjj2005}, and related fields. For large-scale problems, it is common to seek a low-rank factor approximation $X \approx ZZ^T$ with $\text{rank}(Z) \ll n$. Under the standard assumptions that $(A,B)$ is stabilizable and $(C,A)$ is detectable, equation \eqref{CARE} has a unique symmetric positive semidefinite stabilizing solution $X$, that is, the closed-loop matrix $A-BB^T X$ is stable.

A variety of numerical methods have been developed for algebraic Riccati equations. For small- and medium-scale problems, Schur methods, matrix sign-function methods, and structure-preserving doubling (SDA) algorithms are well-established and generally robust, but their cubic computational cost and quadratic storage requirements become prohibitive as $n$ grows \cite{schur1979,sign1980,sign1998,chu2005doubling}.
For large-scale sparse CAREs, methods typically exploit low-rank solution structures and avoid forming dense matrices. Representative approaches include Krylov subspace and related projection methods \cite{EbArnoldi2009,subspace2015,Krylov2024} and Riccati ADI (RADI) iterations \cite{radi2018}.
Another important class comprises Newton-based inner-outer iterations, which use Newton iteration at the outer level and low-rank solvers for the resulting Lyapunov equations. A typical example is the Newton low-rank ADI (NLR-ADI) method, which uses low-rank ADI (LR-ADI) for the inner Lyapunov equations \cite{INRADI2016,dizhi2016,zhang2024lowrank}.

In recent years, modern high performance computing architectures have provided native hardware support for low precision floating point arithmetic \cite{IEEE}. On many modern architectures, lower precision formats offer higher peak throughput than double precision (FP64) and reduce memory traffic, thereby lowering the cost of data movement.
Consequently, mixed precision algorithm design has attracted increasing attention \cite{mxiedzongsu2021,mxiedzongsu2022}. Such algorithms perform computationally intensive operations in lower precision whenever possible, while retaining higher precision for accuracy sensitive computations. With suitable error control, such mixed precision strategies can provide substantial acceleration while preserving the desired accuracy.
The floating point formats discussed and used in this work are summarized in Table \ref{tab:precision}.

\begin{table}[htbp]
\footnotesize
  \centering
  \caption{Parameters for floating point arithmetics}
  \label{tab:precision}
  \begin{tabular}{lcccc}
    \toprule
    Format & Significand & Exponent & Unit roundoff & Range \\
    \midrule
    FP64      & 53  & 11 & $1.11\times 10^{-16}$  & $10^{\pm308}$  \\
    FP32      & 24  & 8  & $5.96\times 10^{-8}$  & $10^{\pm38}$  \\
    FP16      & 11  & 5  & $4.88\times 10^{-4}$  & $10^{\pm5}$  \\
    Bfloat16  & 8   & 8  & $3.91\times 10^{-3}$  & $10^{\pm38}$  \\
    \bottomrule
  \end{tabular}
\end{table}

Mixed precision techniques have been extensively studied in numerical linear algebra, particularly for linear systems, least squares problems, eigensolvers, and matrix factorizations \cite{carson2018,carson2020ls,prikopa2013,mpqr2025}, but their use for matrix equations remains comparatively limited. Initial explorations have primarily focused on linear matrix equations. These studies include mixed precision Schur algorithms for Sylvester equations \cite{liu2025sylvester} and several developments for Lyapunov equations, including refinement frameworks with rigorous rounding error analysis \cite{liu2025lyapunov} and recent mixed precision variants of low-rank ADI methods \cite{arXiv2026Lyapunov}.
For the more challenging nonlinear CAREs, Benner et al. \cite{benner2017,benner2018} presented an early mixed-precision Newton framework. Their work focused primarily on hybrid CPU-GPU execution and energy efficiency. 
In their implementation, the initial approximation was computed using a low-rank SDA method, whereas the Lyapunov correction equations were solved using the matrix sign-function method. The dense inverse-type operations required by the latter limit the direct exploitation of sparsity in large-scale CAREs.

Motivated by these considerations, we propose a general mixed precision Newton refinement framework for large-scale sparse CAREs. Our main contributions are as follows:

\begin{itemize}
    \item \textbf{A structure preserving mixed precision Newton framework:}
    We propose a mixed precision Newton refinement framework for CARE. The framework separates lower precision initial CARE solves and inner Lyapunov solves from higher precision residual evaluation and solution updates. The residual and solution factor decompositions are designed to preserve positive semidefiniteness of the computed solution iterates while controlling rank growth.

    \item \textbf{First-order rounding error analysis:}
    We develop a first-order rounding error analysis for the proposed mixed precision refinement process. The analysis derives perturbation bounds for the residual factor decomposition and the solution factor update, and establishes a residual recurrence that characterizes how the inner solve precision, factorization errors, and truncation errors affect the attainable accuracy. This identifies conditions under which the residual recurrence is contractive up to a precision dependent accuracy floor.

    \item \textbf{Low-rank ADI realization and complexity analysis:}
    We specialize the general framework to large-scale sparse CAREs by using NLR-ADI for the initial approximation and LR-ADI for the inner Lyapunov correction equations. The resulting ADI-based implementation keeps the main computations in shifted linear solves and low-rank factor operations, rather than dense inverse-type operations as in sign function correction approaches. We also give a complexity analysis that identifies the dominant costs and explains why this implementation is suitable for large-scale sparse problems.
  \end{itemize}

  The rest of the paper is organized as follows. Section 2 presents the mixed-precision Newton refinement framework and the associated low-rank factor decompositions. Section 3 develops the first-order rounding error analysis. Section 4 describes the ADI-based realization and complexity analysis for large-scale sparse CAREs. Section 5 reports numerical experiments on synthetic test problems and SuiteSparse benchmark matrices, and Section 6 concludes the paper.

  \section{The Mixed Precision Iterative Refinement Framework}

We first recall the standard Newton iteration for \eqref{CARE}. Define the Riccati operator
$$ \mathcal{R}(X) = A^T X + X A - XBB^T X + C^T C.$$
Starting from an initial approximation $X_0$, the Newton iteration generates iterates $ X_{k+1}=X_k+Y_k$, where the correction $Y_k$ satisfies $\mathcal{R}'_{X_k}(Y_k)=-\mathcal{R}(X_k)$. Here $\mathcal{R}'_{X_k}$ denotes the Fr\'echet derivative of $\mathcal{R}$ at $X_k$.

The Fr\'{e}chet derivative of $\mathcal{R}$ at $X$ applied to a direction $Y$ has the Lyapunov form
\begin{equation*}
\mathcal{R}'_X[Y] = (A - BB^T X)^T Y + Y(A - BB^T X).
\end{equation*}
Therefore, at the $k$-th Newton step, the correction $Y_k$ is obtained by solving the Lyapunov equation
\begin{equation}\label{lyap_eq}
(A - BB^T X_k)^T Y_k + Y_k (A - BB^T X_k) = - \mathcal{R}(X_k).
\end{equation}
If the Lyapunov equation \eqref{lyap_eq} is solved exactly at each step, the resulting sequence of symmetric positive semidefinite matrices $\{X_k\}_{k\geq 0}$ satisfies $X_k \geq X_{k+1}$ for all $k \geq 1$. Furthermore, the matrix $A - BB^TX_k$ remains stable, and the sequence $\{X_k\}_{k\geq 0}$ converges quadratically to the stabilizing solution $X$ \cite{projectNK2019}.

The Newton iteration for the CARE has a correction and update structure analogous to iterative refinement for linear systems. This motivates a mixed precision Newton refinement framework.

To facilitate the algorithmic description and subsequent analysis, we first define the precision levels used throughout the paper. Let $u_s$, $u_r$, $u$, and $u_c$ denote the unit round offs used in different phases of the framework. Specifically, $u_s$ is used for the low precision initial CARE solve and inner Lyapunov solves, $u_r$ for residual evaluation, $u$ for working/storage precision, and $u_c$ for the solution update.

To solve \eqref{lyap_eq} efficiently in the large-scale setting, we use a low-rank formulation. Since the approximate CARE solution is represented as $X_k \approx Z_kZ_k^T$, the corresponding residual can be expressed through low-rank factors and used as the right hand side of the correction Lyapunov equations. Thus, \eqref{lyap_eq} can be solved by a low-rank Lyapunov solver, such as the LR-ADI algorithm. The CARE residual evaluated at $Z_kZ_k^T$ is
\begin{equation}\label{R}
\mathcal{R}(Z_k) = A^T Z_k Z_k^T + Z_k Z_k^T A - Z_k Z_k^T B B^T Z_k Z_k^T + C^T C.
\end{equation}
In general, this residual is indefinite. Therefore, we represent it as the difference of two symmetric positive semidefinite matrices
$$
\mathcal{R}(Z_k) = \mathcal{R}^+(Z_k) - \mathcal{R}^-(Z_k) = C_k^+(C_k^+)^T - C_k^-(C_k^-)^T.
$$
Subsequently, the correction factors $Z_k^+$ and $Z_k^-$ are determined by solving the following two Lyapunov equations
\begin{equation}\label{lyap2}
\begin{aligned}
A_k^T X_k^+ &+ X_k^+A_k = -C^+_k (C^+_k)^T , \quad X_k^+ = Z_k^+ (Z_k^+)^T,\\
A_k^TX_k^- &+ X_k^-A_k  = -C^-_k (C^-_k)^T , \quad X_k^- = Z_k^- (Z_k^-)^T,
\end{aligned}
\end{equation}
where $A_k = A - BB^T X_k$.

Directly computing this decomposition from the full residual $\mathcal{R}(Z_k)$ is impractical for large $n$. Let $r_k$ denote the number of columns of $Z_k$. From the structure of \eqref{R}, the residual can be written in the factored form
$$
\begin{aligned}
\mathcal{R}(Z_k) &= A^T Z_k Z_k^T + Z_k Z_k^T A - Z_k Z_k^T B B^T Z_k Z_k^T + C^T C\\
&  = [Z_k,A^T Z_k , Z_k Z_k^T B ,C^T] \left[\begin{array}{cccc} 0 & I_{r_k} & 0 & 0\\ I_{r_k} & 0 & 0 & 0 \\ 0 & 0 & -I_m & 0  \\ 0 & 0 & 0 & I_p\\ \end{array} \right] \left[\begin{array}{c} Z_k \\A^T Z_k \\ Z_k Z_k^T B \\ C^T \\ \end{array} \right]^T \\
& =: F_kM_kF_k^T,
\end{aligned}
$$
where $F_k \in \mathbb{R}^{n \times q_k}$, $q_k := 2r_k + m + p \ll n$. Compute a thin QR decomposition $F_k=U_kT_k$, where the columns of $U_k$ form an orthonormal basis for $\operatorname{range}(F_k)$. Then
$$
\mathcal{R}(Z_k)=U_k H_k U_k^T,\qquad H_k:=T_kM_kT_k^T.
$$
Since $H_k$ has dimension equal to the column dimension of $F_k$, which is much smaller than $n$, we compute the eigenvalue decomposition
$$H_k = [Q_k^+,Q_k^-]\left[\begin{array}{cc} \Lambda_k^+ & 0\\ 0 & \Lambda_k^- \\ \end{array} \right][Q_k^+,Q_k^-]^T.$$
Let $J^+ := \{ j \mid \lambda_j > 0\}$ and $J^- := \{ j \mid \lambda_j \leq 0\}$, where $\lambda_j$ are the eigenvalues of $H_k$. We define $\Lambda_k^+ = \text{diag}(\lambda_j)_{j \in J^+}$ and $\Lambda_k^- = \text{diag}(\lambda_j)_{j \in J^-}$, with $Q_k^+$ and $Q_k^-$ denoting the corresponding eigenvector matrices.

The positive and negative residual factors are then given by
$$C_k^+ = U_k Q_k^+ (\Lambda_k^+)^{1/2} \quad \text{and} \quad C_k^- = U_k Q_k^- (-\Lambda_k^-)^{1/2}.$$
In practice, to enhance numerical robustness, we truncate negligible eigenvalues based on a relative threshold $\eta_r > 0$. This systematic decomposition procedure is summarized in Algorithm \ref{res_fac1}.

\begin{algorithm}
\caption{Residual Factor Decomposition}
\label{res_fac1}
\begin{algorithmic}[1]
\Require Matrices $A, B, C$, solution factors $Z_k$, relative threshold parameter $\eta_r>0$;
\State Let $F_k = [Z_k,A^T Z_k , Z_k Z_k^T B ,C^T]$, compute QR decomposition of $F_k$: $F_k = U_k T_k$;
\State Let $H_k = T_k M_k T_k^T$, compute spectral decomposition of $H_k$: $H_k = Q_k \Lambda_k Q_k^T$;
\State $\Lambda_k^+ = \text{diag}(\lambda_j),j \in J^+:=\{ j | \lambda_j \geq \eta_r\}, Q_k^+ = Q_k(:,J^+)$;
\State $\Lambda_k^- = \text{diag}(\lambda_j),j \in J^-:=\{ j | \lambda_j \leq -\eta_r\}, Q_k^- = Q_k(:,J^-)$;
\State $C_k^+ = U_k Q_k^+ (\Lambda_k^+)^{1/2}$, $C_k^- = U_k Q_k^- (-\Lambda_k^-)^{1/2}$;
\Ensure Residual factors $C_k^+,C_k^-$.
\end{algorithmic}
\end{algorithm}

In exact arithmetic, after solving the correction equations \eqref{lyap2}, the updated solution has the form $X_{k+1}=X_k+X_k^+-X_k^-$, where both $X_k^+$ and $X_k^-$ are symmetric positive semidefinite. In floating point arithmetic, however, the subtraction of $X_k^-$ may introduce cancellation errors, and the computed update may lose positive semidefiniteness. To preserve the positive semidefinite structure of the solution iterate, we perform a solution factor decomposition and retain only the positive spectral part.

Let $r_k^+$ and $r_k^-$ denote the numbers of columns of $\widehat{Z}_k^+$ and $\widehat{Z}_k^-$, respectively, and set $s_k = r_k + r_k^+ + r_k^-$. Using the same low-rank decomposition strategy as for the residual, we express the updated matrix $X_{k+1} = Z_k Z_k^T + Z_k^+ (Z_k^+)^T - Z_k^- (Z_k^-)^T$ as
$$
X_{k+1} = [Z_k,Z_k^+,Z_k^-]\left[\begin{array}{ccc} I_{r_k} & 0 & 0\\ 0 & I_{r_k^+} & 0 \\ 0 & 0 & -I_{r_k^-} \\ \end{array} \right] [Z_k,Z_k^+,Z_k^-]^T := G_k N_k G_k^T,
$$
where $G_k \in \mathbb{R}^{n \times s_k}$. We compute a thin QR decomposition $G_k = V_k \Gamma_k$, which allows us to write $X_{k+1} = V_k P_k V_k^T$, where $P_k := \Gamma_k N_k \Gamma_k^T$. We then compute the eigenvalue decomposition $P_k = \Theta_k \Sigma_k \Theta_k^T$. The positive semidefinite part of $P_k$ is obtained by retaining its positive eigenvalues. Therefore, the updated solution factor is given by $Z_{k+1} := V_k \Theta_k^+ (\Sigma_k^+)^{1/2}$, where $\Theta_k^+$ contains the corresponding eigenvectors, and $\Sigma_k^+ = \text{diag}(\sigma_j)$ for $j \in J^+ := \{ j \mid \sigma_j > 0\}$.
Moreover, $P_k^-=\Theta_k^-\Sigma_k^-(\Theta_k^-)^T$ denotes the negative spectral part of $P_k$.

To control rank growth in practice, we introduce a relative truncation tolerance $\eta_s>0$ and retain only positive eigenvalues above the prescribed relative threshold, following the same strategy as in the residual decomposition. The resulting solution factor decomposition is summarized in Algorithm \ref{sol_fac}.

\begin{algorithm}[htbp]
\caption{Solution Factor Decomposition}
\label{sol_fac}
\begin{algorithmic}[1]
\Require Solution factors  $Z_k$, $Z_k^+, Z_k^-$, relative threshold parameter $\eta_s>0$;
\State Let $G_k = [Z_k,Z_k^+,Z_k^-]$, compute QR decomposition of $G_k$: $G_k = V_k \Gamma_k$;
\State Let $P_k = \Gamma_k N_k \Gamma_k^T$, compute spectral decomposition of $P_k$: $P_k = \Theta_k \Sigma_k \Theta_k^T$;
\State $\Sigma_k^+ = \text{diag}(\sigma_j),j \in J^+:=\{ j | \sigma_j \geq \eta_s \}, \Theta_k^+ = \Theta_k(:,J^+)$;
\State $Z_{k+1} = V_k \Theta_k^+ (\Sigma_k^+)^{1/2}$;
\Ensure Solution factors $Z_{k+1}$.
\end{algorithmic}
\end{algorithm}

Framework \ref{IR_1} summarizes the complete mixed precision iterative refinement framework for the CAREs. The algorithm terminates when the residual Frobenius norm $\|\mathcal{R}(Z_k)\|_F$ falls below a prescribed tolerance. This stopping criterion introduces negligible additional cost, since the norm can be evaluated from the eigenvalues of the small core matrix computed before truncation in Algorithm \ref{res_fac1}.

\floatname{algorithm}{Framework}
\begin{algorithm}[!htb]
\caption{Factorization-based Mixed Precision Newton Iterative Refinement Framework}
\label{IR_1}
\begin{algorithmic}[1]
\Require Matrices $A,B,C,\tau_R,k_{\max}$, precisions $u_s \geq u \geq u_r, u_c >0$;
\State Compute $A^TX + XA - XBB^TX + C^TC = 0$ in precision $u_s$, obtain approximate solution factor $Z_1$, and store $Z_1$ in precision $u$;
\For{$k=1,2,\dots,k_{\max}$}
\State Apply Algorithm \ref{res_fac1} to decompose residual factors in precision $u_r$, and store $C_k^+, C_k^-$ in precision $u_s$;
\If{$\|\mathcal{R}(Z_{k}Z_{k}^T)\|_F \leq \tau_R\|C^TC\|_F$}
\State {\bf break};
\EndIf
\State Solve $A_k^T X_k^+ + X_k^+ A_k + C^+_k (C^+_k)^T  = 0$ and $A_k^T X_k^- + X_k^-A_k + C^-_k (C^-_k)^T  = 0$ in precision $u_s$, store solution factors $Z_k^+, Z_k^-$ in precision $u$;
\State Apply Algorithm \ref{sol_fac} to update solution factor in precision $u_c$ and obtain $Z_{k+1}$;
\EndFor
\Ensure Solution factor $Z_{k+1}$, such that $X_{k+1} \approx Z_{k+1}Z_{k+1}^T$.
\end{algorithmic}
\end{algorithm}
\floatname{algorithm}{Algorithm}

Although the factorization derivations in the preceding sections were formulated independently of the arithmetic precision, their numerical implementation in Framework \ref{IR_1} is subject to a specific precision hierarchy. Following the standard iterative refinement paradigm, both the computation of the initial solution and the inner Lyapunov solves are performed at the lowest precision $u_s$. To recover the low order digits lost during this solve phase, the subsequent residual evaluation and solution update must be performed at higher precision. Hence, we enforce the hierarchy
\begin{equation*}
u_s \geq u \geq u_r, u_c > 0,
\end{equation*}
where a larger unit roundoff indicates lower arithmetic precision.

\section{Rounding Error Analysis}

In this section, we derive the rounding error bounds for Framework \ref{IR_1}. To facilitate this analysis, we first establish the notation for floating point arithmetic and error bounds. To distinguish the precision levels within our Framework \ref{IR_1}, we attach subscripts to $u$ corresponding to the specific algorithmic phases defined in Section 2.

Following the standard floating-point error model, for any positive integer $n$ satisfying $cnu<1$, we define the constants
$$ \gamma_n = \frac{nu}{1 - nu}, \quad \text{and} \quad \widetilde{\gamma}_n = \frac{cnu}{1 - cnu}, $$
where $c$ is a small integer constant depending on the specific matrix operation. These constants conveniently encapsulate the first-order accumulation of rounding errors in basic matrix computations. When $\gamma_n$ or $\widetilde{\gamma}_n$ carries a superscript, it indicates that the underlying unit roundoff is evaluated at the specified precision level; for example, $\gamma_n^s := \dfrac{nu_s}{1 - nu_s}$.

Throughout the first-order rounding error analysis, we use $\widehat{\cdot}$ to denote quantities computed in finite precision; for example, $\widehat{Z}_k$ is the computed approximation of the exact factor $Z_k$, and $\widehat{X}_k=\widehat{Z}_k\widehat{Z}_k^T$ is the corresponding computed solution approximation. For a low-rank factor $Z$, we write $\mathcal{R}(Z)$ as shorthand for $\mathcal{R}(ZZ^T)$. For a matrix $M$, $|M|$ denotes the componentwise absolute value, and matrix inequalities are interpreted componentwise. The symbol $\lesssim$ denotes an inequality up to a modest constant factor, with higher-order terms in the unit roundoffs neglected.

With these notations in place, we proceed with the first-order rounding error analysis of Framework \ref{IR_1} under the following standard assumptions.

\begin{assumption}\label{ass_factor}
The computed residual and solution factors are treated as exact low-rank representations of the corresponding computed symmetric positive semidefinite matrices. That is, for the residual splitting,
$$
\widehat{\mathcal{R}}^+(\widehat{Z}_k)
= \widehat{C}_k^+(\widehat{C}_k^+)^T,\quad
\widehat{\mathcal{R}}^-(\widehat{Z}_k)
= \widehat{C}_k^-(\widehat{C}_k^-)^T,
$$
and for the solution factors,
$$
\widehat{X}_k=\widehat{Z}_k\widehat{Z}_k^T,\quad
\widehat{X}_k^+=\widehat{Z}_k^+(\widehat{Z}_k^+)^T,\quad
\widehat{X}_k^-=\widehat{Z}_k^-(\widehat{Z}_k^-)^T.
$$
\end{assumption}

\begin{assumption}\label{ass_ly}
Let $\mathcal{L}_k(X) = A_k^T X + XA_k$ denote the Lyapunov operator. The computed solution factors $\widehat{Z}_k^+$ and $\widehat{Z}_k^-$ generated by the inner solver (Line 7 of Framework \ref{IR_1}) satisfy the perturbed equations
\[
A_k^T \widehat{X}_k^+ + \widehat{X}_k^+ A_k + C^+_k (C^+_k)^T = 0, \quad A_k^T \widehat{X}_k^- + \widehat{X}_k^- A_k + C^-_k (C^-_k)^T  = 0,
\]
such that the combined backward error is bounded by
\begin{equation}\label{solver}
\begin{aligned}
\|\mathcal{L}_k(\widehat{X}_k^+ - \widehat{X}_k^-)
+ \widehat{\mathcal{R}}^+(\widehat{Z}_k)
- \widehat{\mathcal{R}}^-(\widehat{Z}_k)\|_F
&\leq u_s d_1 \|\mathcal{L}_k\|_F \|\widehat{X}_k^+ - \widehat{X}_k^-\|_F \\
&\qquad + u_s d_2 \|\widehat{\mathcal{R}}^+(\widehat{Z}_k) - \widehat{\mathcal{R}}^-(\widehat{Z}_k)\|_F .
\end{aligned}
\end{equation}
where $d_1, d_2$ are modest constants depending on the problem dimension and the specific solver employed.
\end{assumption}

Assumption \ref{ass_ly} formalizes the backward stability of the inner solver. Provided that the precision $u_s$ is sufficiently small (satisfying $d_1\kappa(\mathcal{L}_k)u_s < 1$) and the solver is numerically stable, this assumption ensures that the perturbations introduced by the low precision inner solves are proportional to the residuals and bounded by $\mathcal{O}(u_s)$. 
This condition is essential for the convergence analysis of Framework \ref{IR_1}, since it allows the errors from the lower precision inner solves to be controlled by the higher precision residual evaluation and solution updates.

Under the standard floating point model, the computed matrix $\widehat{F}_k$ satisfies
$$\widehat{F}_k = [\widehat{Z}_k,A^T\widehat{Z}_k+\Delta F_k^{(2)},\widehat{Z}_k\widehat{Z}_k^TB +\Delta F_k^{(3)}, C^T],$$
where the rounding error $\Delta F_k^{(2)}$ satisfies $|\Delta F_k^{(2)}|\leq \gamma_n^r|A^T||\widehat{Z}_k|$, and $\Delta F_k^{(3)}$ satisfies $|\Delta F_k^{(3)}|\leq (\gamma_n^r+\gamma_{r_k}^r)|\widehat{Z}_k||\widehat{Z}_k^T||B|$.

Introduce the stability constants $b_1$, $b_2$, and $b_3$ by
\begin{align}
\||A^T||\widehat{Z}_k|\|_F + \||\widehat{Z}_k||\widehat{Z}_k^T||B|\|_F &= b_1\|F_k\|_F, \label{b1}\\
\|T_kT_k^T\|_F &= b_2\|T_kM_kT_k^T\|_F = b_2\|H_k\|_F, \label{b2}\\
\|\Gamma_k\Gamma_k^T\|_F &= b_3\|\Gamma_kN_k\Gamma_k^T\|_F = b_3\|P_k\|_F. \label{b3}
\end{align}
Here $b_1$ measures the normwise growth in forming $A^T\widehat{Z}_k$ and $\widehat{Z}_k(\widehat{Z}_k^TB)$, while $b_2$ and $b_3$ measure possible numerical cancellation in the formation of $H_k$ and $P_k$, respectively.

We define the computed residual and the associated residual errors by
\begin{align}
\widehat{\mathcal{R}}(\widehat{Z}_k) &:= \widehat{\mathcal{R}}^+(\widehat{Z}_k) - \widehat{\mathcal{R}}^-(\widehat{Z}_k) + \Delta\mathcal{R}_k^s, \label{computed_residual_def}\\
\Delta\widehat{\mathcal{R}}_k &:= \widehat{\mathcal{R}}^+(\widehat{Z}_k) - \widehat{\mathcal{R}}^-(\widehat{Z}_k) - \mathcal{R}(\widehat{Z}_k), \label{factorized_residual_error}\\
\Delta\mathcal{R}(\widehat{Z}_k) &:= \widehat{\mathcal{R}}(\widehat{Z}_k) - \mathcal{R}(\widehat{Z}_k).
\label{full_residual_error}
\end{align}
where $\| \Delta \mathcal{R}_k^s\|_F \leq 2u_s \|\widehat{\mathcal{R}}(\widehat{Z}_k)\|_F$. These definitions imply
\begin{equation}\label{residual_error_relation}
\Delta\mathcal{R}(\widehat{Z}_k) = \Delta\widehat{\mathcal{R}}_k + \Delta\mathcal{R}_k^s.
\end{equation}
The solution update error is defined by
\begin{equation}\label{solution_update_error}
\Delta E_k := \widehat{X}_{k+1} - ( \widehat{X}_k+\widehat{X}_k^+-\widehat{X}_k^- ).
\end{equation}

The residual factor decomposition and the solution factor update in Framework \ref{IR_1} have the same low-rank spectral decomposition structure as the procedures analyzed in \cite{liu2025lyapunov}. 
Adapting the first-order normwise perturbation analysis in \cite{liu2025lyapunov} to the factor pairs $(F_k,H_k)$ and $(G_k,P_k)$ in our setting gives the following error bounds.

\begin{lemma}\label{lem_factor_update}
Under the standard floating-point model, and neglecting terms of second and
higher order in the unit roundoffs and truncation thresholds, the residual
factor decomposition and the low-rank solution update at step $k$ of
Framework~\ref{IR_1} satisfy
\begin{equation}\label{alphabeta}
\begin{aligned}
\|\Delta\widehat{\mathcal{R}}_k\|_F
&\lesssim \left( 2\sqrt{m_k}\eta_r + \left( 2b_1b_2+(4b_2+2)m_k^{3/2} \right)\widetilde{\gamma}_n^r \right)
\|\mathcal{R}(\widehat{Z}_k)\|_F\\
&=: \bar{\alpha}_k \|\mathcal{R}(\widehat{Z}_k)\|_F,
\\
\|\Delta\mathcal{R}(\widehat{Z}_k)\|_F
&\lesssim \left( 2\sqrt{m_k}\eta_r + 2u_s + \left( 2b_1b_2+(4b_2+2)m_k^{3/2} \right)\widetilde{\gamma}_n^r \right)
\|\mathcal{R}(\widehat{Z}_k)\|_F\\
&=: \alpha_k \|\mathcal{R}(\widehat{Z}_k)\|_F,
\\
\|\Delta E_k\|_F
&\lesssim \left( 2\sqrt{p_k}\eta_s +\eta_n +(4b_3+2)p_k^{3/2}\widetilde{\gamma}_n^c \right)
\|\widehat{X}_{k+1}\|_F\\
&=: \beta_k \|\widehat{X}_{k+1}\|_F.
\end{aligned}
\end{equation}
Here,
$$
m_k=4\max\{m,p,r_k\},
\qquad
p_k=3\max\{r_k,r_k^+,r_k^-\},
$$
$\eta_n$ is determined by
$$
\|P_k^-\|_F=\eta_n\|P_k\|_F
$$
and is assumed to satisfy $\eta_n \ll 1$.
In particular,
$$
\alpha_k=\bar{\alpha}_k+2u_s.
$$
\end{lemma}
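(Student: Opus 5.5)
We follow the three-stage error propagation of \cite{liu2025lyapunov}, once for the pair $(F_k,H_k)$ and once for $(G_k,P_k)$, keeping only first-order terms.

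\textbf{Residual factor bound.} We split $\Delta\widehat{\mathcal{R}}_k$ into four contributions.
\begin{itemize}
\item Forming $\widehat F_k$. Write $\widehat F_k=F_k+\Delta F_k$. By the componentwise bounds on $\Delta F_k^{(2)}$ and $\Delta F_k^{(3)}$ and the definition \eqref{b1}, $\|\Delta F_k\|_F\lesssim b_1\widetilde\gamma_n^r\|F_k\|_F$.
\item The thin QR of $\widehat F_k$. Householder QR is backward stable, so $\widehat F_k+\Delta F_k^{QR}=\widehat U_k\widehat T_k$ with $\widehat U_k$ orthonormal to first order and $\|\Delta F_k^{QR}\|_F\lesssim q_k^{3/2}\widetilde\gamma_n^r\|F_k\|_F$. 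Here $q_k\le m_k$, which produces the $m_k^{3/2}$ factors.
\item Forming $H_k=T_kM_kT_k^T$. The perturbation of $T_k$ enters twice, through $\Delta T_kM_kT_k^T+T_kM_k\Delta T_k^T$. We bound it by $2\|\Delta T_k\|\,\|T_k\|$ and convert $\|T_kT_k^T\|_F$ into $b_2\|H_k\|_F$ via \eqref{b2}. Since $H_k$ is orthogonally similar to $\mathcal{R}(\widehat Z_k)$ on range$(U_k)$, we have $\|H_k\|_F=\|\mathcal{R}(\widehat Z_k)\|_F$. This yields the terms $2b_1b_2\widetilde\gamma^r_n$ and $4b_2m_k^{3/2}\widetilde\gamma^r_n$.
\item The symmetric eigendecomposition of $H_k$ and the final products $U_kQ_k^\pm(\pm\Lambda_k^\pm)^{1/2}$. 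Both are backward stable with error $\lesssim m_k^{3/2}\widetilde\gamma_n^r\|H_k\|_F$. Counting the two halves $C_k^+$ and $C_k^-$, this gives $2m_k^{3/2}\widetilde\gamma^r_n$.
\end{itemize}
The truncation in Algorithm \ref{res_fac1} discards at most $m_k$ eigenvalues of modulus below $\eta_r$, taken relative to $\|H_k\|_F$. The dropped part therefore has Frobenius norm at most $\sqrt{m_k}\eta_r\|\mathcal{R}(\widehat Z_k)\|_F$ per sign class, which gives $2\sqrt{m_k}\eta_r$ in total. Summing the four contributions and the truncation gives the $\bar\alpha_k$ bound.

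\textbf{Full residual error.} By \eqref{residual_error_relation}, $\Delta\mathcal{R}(\widehat Z_k)=\Delta\widehat{\mathcal{R}}_k+\Delta\mathcal{R}_k^s$. We are given $\|\Delta\mathcal{R}_k^s\|_F\le 2u_s\|\widehat{\mathcal R}(\widehat Z_k)\|_F$, and to first order $\|\widehat{\mathcal R}(\widehat Z_k)\|_F=\|\mathcal R(\widehat Z_k)\|_F$. Hence $\alpha_k=\bar\alpha_k+2u_s$.

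\textbf{Solution update.} We repeat the argument for $G_k=[\widehat Z_k,\widehat Z_k^+,\widehat Z_k^-]$, now in precision $u_c$. The differences from the residual case are:
\begin{itemize}
\item Forming $G_k$ is exact, since it is only a concatenation, so there is no $b_1$-type term.
\item QR and the core product $\Gamma_kN_k\Gamma_k^T$ contribute $4b_3p_k^{3/2}\widetilde\gamma_n^c$, using \eqref{b3}. Here $s_k\le p_k$.
\item The eigensolver and factor formation contribute $2p_k^{3/2}\widetilde\gamma^c_n$.
\item Dropping the negative spectral part $P_k^-$ contributes $\eta_n\|P_k\|_F$.
\item Dropping small positive eigenvalues contributes $2\sqrt{p_k}\eta_s$.
\end{itemize}
The reference norm is $\|P_k\|_F=\|\widehat X_k+\widehat X_k^+-\widehat X_k^-\|_F$. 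This equals $\|\widehat X_{k+1}\|_F$ to first order, because $\eta_n\ll1$ and the other corrections are higher order.

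\textbf{Main obstacle.} The delicate step is passing from $\|T_k\|$-type quantities to $\|H_k\|_F$ for an indefinite core, where cancellation can make $\|H_k\|$ much smaller than $\|T_k\|^2$. This is exactly why $b_2$ and $b_3$ were introduced. We would handle it by bounding every perturbation of the core by $\|T_kT_k^T\|_F$ multiplied by a relative error, and only then substituting \eqref{b2}. We also need to verify that the orthogonality loss of the computed $U_k$ enters only at first order, through the same $m_k^{3/2}\widetilde\gamma_n^r$ term.
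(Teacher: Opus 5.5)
Your proposal is correct and follows essentially the same route as the paper, which gives no separate derivation but adapts the first-order normwise analysis of Benner and Liu to the pairs $(F_k,H_k)$ and $(G_k,P_k)$. Your sketch in fact spells out more of the bookkeeping than the paper does: the roles of $b_1,b_2,b_3$, the truncation counts, $\|H_k\|_F=\|\mathcal{R}(\widehat{Z}_k)\|_F$, and $\|P_k\|_F\approx\|\widehat{X}_{k+1}\|_F$ via $\eta_n\ll 1$. The only thing to tidy is constants: in the residual part, the two-sided entry of the QR backward error gives only $2b_2m_k^{3/2}\widetilde{\gamma}_n^r$, so the other half of $4b_2$ must come from the rounding in forming $T_kM_kT_k^T$ (which you count explicitly only for $\Gamma_kN_k\Gamma_k^T$); and passing from $\|T_k\|_F\|T_k\|_2$ to $\|T_kT_k^T\|_F$ can cost a factor of order $q_k^{1/4}$, which is absorbed into $\lesssim$.
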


Combining  Assumptions \ref{ass_factor} and \ref{ass_ly}  with Lemma \ref{lem_factor_update}, we obtain the following backward error recurrence for Framework \ref{IR_1}.

\begin{theorem}\label{thm_res}
Let $\widehat{X}_k=\widehat{Z}_k\widehat{Z}_k^T$ be the computed iterate generated by Framework~\ref{IR_1}, and suppose that the closed-loop matrix $A_k=A-BB^T\widehat{X}_k$ is stable. Assume that the inner Lyapunov solver satisfies Assumption~\ref{ass_ly}, and let $\alpha_k$ and $\beta_k$ be defined as in Lemma~\ref{lem_factor_update}.
For all $k \geq 0$, if
$$ d_1\kappa_F(\mathcal{L}_k)u_s<1, $$
define
$$ c_1 = u_s \dfrac{d_1\kappa_F(\mathcal{L}_k)+d_2}{1-u_sd_1\kappa_F(\mathcal{L}_k)}, $$
and
$$
\begin{aligned}
\rho_k &= c_1(1+\bar{\alpha}_k)+\bar{\alpha}_k,\\
\mu_k &= 2(1+c_1)^2(1+\bar{\alpha}_k)^2 \|B\|_F^2 \|\mathcal{L}_k^{-1}\|_F^2,\\
\nu_k &= 2\beta_k^2\|B\|_F^2.
\end{aligned}
$$
Then the residual associated with the computed next factor $\widehat{Z}_{k+1}$ satisfies
\begin{equation}
\begin{aligned}
\|\mathcal{R}(\widehat{Z}_{k+1})\|_F &\lesssim \rho_k \|\mathcal{R}(\widehat{Z}_k)\|_F
+ \mu_k \|\mathcal{R}(\widehat{Z}_k)\|_F^2  \\
&\quad + \beta_k\|\mathcal{L}_k\|_F\|\widehat{X}_{k+1}\|_F + \nu_k\|\widehat{X}_{k+1}\|_F^2.
\end{aligned}
\end{equation}
\end{theorem}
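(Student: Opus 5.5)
The plan rests on the exact expansion of the Riccati operator at a perturbed point. For any symmetric $X$ and increment $\Delta$,
\[
\mathcal{R}(X+\Delta)=\mathcal{R}(X)+\mathcal{R}'_X[\Delta]-\Delta BB^T\Delta .
\]
Apply this with $X=\widehat{X}_k$ and $\Delta=\widehat{X}_{k+1}-\widehat{X}_k$. By \eqref{solution_update_error},
\[
\Delta = Y_k+\Delta E_k,\qquad Y_k:=\widehat{X}_k^+-\widehat{X}_k^-,
\]
and $\mathcal{R}'_{\widehat X_k}=\mathcal{L}_k$. Write $S_k:=\widehat{\mathcal{R}}^+(\widehat{Z}_k)-\widehat{\mathcal{R}}^-(\widehat{Z}_k)$. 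By \eqref{factorized_residual_error}, $S_k=\mathcal{R}(\widehat Z_k)+\Delta\widehat{\mathcal{R}}_k$. Combining these gives
\[
\mathcal{R}(\widehat Z_{k+1})=\bigl[\mathcal{L}_k(Y_k)+S_k\bigr]-\Delta\widehat{\mathcal{R}}_k+\mathcal{L}_k(\Delta E_k)-(Y_k+\Delta E_k)BB^T(Y_k+\Delta E_k).
\]
I then bound each term in Frobenius norm. The quadratic term is handled with $\|(a+b)\|^2\le 2\|a\|^2+2\|b\|^2$, which is where the factors $2$ in $\mu_k,\nu_k$ come from.

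\textbf{Step 1 (size of the correction).} Let $E:=\mathcal{L}_k(Y_k)+S_k$. By Assumption~\ref{ass_ly},
\[
\|E\|_F\le u_s d_1\|\mathcal{L}_k\|_F\|Y_k\|_F+u_sd_2\|S_k\|_F .
\]
Since $Y_k=\mathcal{L}_k^{-1}(E-S_k)$, this yields
\[
\|Y_k\|_F\le \|\mathcal{L}_k^{-1}\|_F(1+u_sd_2)\|S_k\|_F+u_sd_1\kappa_F(\mathcal{L}_k)\|Y_k\|_F .
\]
Under $d_1\kappa_F(\mathcal{L}_k)u_s<1$ I absorb the last term to the left. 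I will check that $(1+u_sd_2)/(1-u_sd_1\kappa_F)=1+c_1-u_sd_1\kappa_F/(1-\cdots)$, which is at most $1+c_1$. This gives $\|Y_k\|_F\le(1+c_1)\|\mathcal{L}_k^{-1}\|_F\|S_k\|_F$.

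Substituting back, $\|E\|_F\le c_1\|S_k\|_F$. The point to verify is that $u_sd_1\kappa_F\cdot\frac{1+u_sd_2}{1-u_sd_1\kappa_F}+u_sd_2=u_s\frac{d_1\kappa_F+d_2}{1-u_sd_1\kappa_F}=c_1$, which holds to first order. Finally, Lemma~\ref{lem_factor_update} gives $\|S_k\|_F\le(1+\bar\alpha_k)\|\mathcal{R}(\widehat Z_k)\|_F$.

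\textbf{Step 2 (assemble).} The linear terms give
\[
c_1(1+\bar\alpha_k)\|\mathcal{R}\|_F+\bar\alpha_k\|\mathcal{R}\|_F=\rho_k\|\mathcal{R}(\widehat Z_k)\|_F .
\]
The update-error term gives $\|\mathcal{L}_k\|_F\|\Delta E_k\|_F\le\beta_k\|\mathcal{L}_k\|_F\|\widehat X_{k+1}\|_F$. For the quadratic term, $\|YBB^TY\|_F\le\|B\|_F^2\|Y\|_F^2$. Combining this with Step 1 gives $\mu_k\|\mathcal{R}\|_F^2$, and $\|\Delta E_k\|_F^2\le\beta_k^2\|\widehat X_{k+1}\|_F^2$ gives the $\nu_k$ term. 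All bounds are read at first order in the unit roundoffs in the $\lesssim$ sense, except that the genuinely quadratic residual term $\mu_k\|\mathcal{R}\|^2$ is kept.

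\textbf{Main obstacle.} The delicate point is Step 1: inverting the backward error bound of Assumption~\ref{ass_ly} into a forward bound on $Y_k$ with exactly the constants $1+c_1$ and $c_1$. This requires care with the term $u_sd_1\|\mathcal{L}_k\|_F\|Y_k\|_F$. One must also interpret $\kappa_F(\mathcal{L}_k)=\|\mathcal{L}_k\|_F\|\mathcal{L}_k^{-1}\|_F$, where stability of $A_k$ guarantees that the inverse exists.

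A minor subtlety is the mismatch between $\Delta\widehat{\mathcal{R}}_k$, which enters the inner equations, and $\Delta\mathcal{R}(\widehat Z_k)$, which includes $\Delta\mathcal{R}^s_k$. Since the inner solves use the stored factors, I take the right-hand side to be $S_k$ as in Assumption~\ref{ass_ly}, so that only $\bar\alpha_k$ appears in $\rho_k$. The $u_s$ storage error is absorbed into $d_2$.
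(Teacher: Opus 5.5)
Your proposal is correct and follows the paper's proof essentially step for step. It uses the same exact expansion of $\mathcal{R}$ about $\widehat{X}_k$, the same inversion of Assumption~\ref{ass_ly} through $\|\mathcal{L}_k^{-1}\|_F$, and the same $2a^2+2b^2$ splitting of the quadratic term; the only difference is that you solve for $\|Y_k\|_F$ first and then bound your $E$ (the paper's $W_k$), while the paper solves for $\|W_k\|_F$ first. One small correction: $(1+u_sd_2)/(1-u_sd_1\kappa_F(\mathcal{L}_k))$ equals $1+c_1$ exactly (the extra subtracted term is an algebra slip), and your identity for $c_1$ likewise holds exactly, not just to first order.
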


\begin{proof}

By the definition of the solution update error, the computed iterate satisfies
$$
\widehat{X}_{k+1} = \widehat{X}_{k} + \widehat{X}_{k}^+ - \widehat{X}_{k}^- + \Delta E_k, 
$$
Substituting this expression into $\mathcal{R}(\widehat{X}_{k+1})$ and using $\widehat{X}_k=\widehat{Z}_k\widehat{Z}_k^T$, 
together with the definition of the Lyapunov operator $\mathcal{L}_k$, a direct expansion gives
\begin{equation}\label{residual_expansion}
\begin{aligned}
\mathcal{R}(\widehat{Z}_{k+1}) &= \mathcal{R}(\widehat{Z}_{k}) + \mathcal{L}_k(\widehat{X}_{k}^+ - \widehat{X}_{k}^-) + \mathcal{L}_k(\Delta E_k) \\
&\quad - (\widehat{X}_{k}^+ - \widehat{X}_{k}^- + \Delta E_k) BB^T (\widehat{X}_{k}^+ - \widehat{X}_{k}^- + \Delta E_k).
\end{aligned}
\end{equation}
Let 
\begin{equation}\label{inner_solver_error}
  W_k:=\mathcal{L}_k(\widehat{X}_k^+ - \widehat{X}_k^-) + (\widehat{\mathcal{R}}^+(\widehat{Z}_k) - \widehat{\mathcal{R}}^-(\widehat{Z}_k))
\end{equation}
denote the backward error introduced by the solver when computing the solution increment at step $k$. Combining \eqref{factorized_residual_error} and \eqref{inner_solver_error} gives
\begin{equation}\label{lyapunov_increment_relation}
\mathcal{L}_k ( \widehat{X}_k^+-\widehat{X}_k^- ) =  W_k - (\mathcal{R}(\widehat{Z}_k) +  \Delta\widehat{\mathcal{R}}_k ).
\end{equation}
Substituting \eqref{lyapunov_increment_relation} into
\eqref{residual_expansion}, we obtain
\begin{equation}\label{final_residual_identity}
\begin{aligned}
\mathcal{R}(\widehat{Z}_{k+1})
={}& W_k - \Delta\widehat{\mathcal{R}}_k + \mathcal{L}_k(\Delta E_k)\\
&- ( \widehat{X}_k^+-\widehat{X}_k^-+\Delta E_k ) BB^T ( \widehat{X}_k^+-\widehat{X}_k^-+\Delta E_k ).
\end{aligned}
\end{equation}

By Lemma \ref{lem_factor_update}, the residual factorization error and the
solution update error satisfy
\begin{equation}\label{factor_update_bounds}
\|\Delta\widehat{\mathcal{R}}_k\|_F \lesssim \bar{\alpha}_k \|\mathcal{R}(\widehat{Z}_k)\|_F,
\qquad \|\Delta E_k\|_F \lesssim \beta_k \|\widehat{X}_{k+1}\|_F.
\end{equation}
Moreover, by the definition of $\Delta\widehat{\mathcal{R}}_k$,
$$
\begin{aligned}
\| \widehat{\mathcal{R}}^+(\widehat{Z}_k) - \widehat{\mathcal{R}}^-(\widehat{Z}_k) \|_F
&\leq \|\mathcal{R}(\widehat{Z}_k)\|_F + \|\Delta\widehat{\mathcal{R}}_k\|_F\\
&\lesssim (1+\bar{\alpha}_k) \|\mathcal{R}(\widehat{Z}_k)\|_F.
\end{aligned}
$$

From Assumption \ref{ass_ly}, we have
\begin{equation}\label{solver_error_bound}
\|W_k\|_F \leq u_s d_1 \|\mathcal{L}_k\|_F \|\widehat{X}_k^+-\widehat{X}_k^-\|_F + u_s d_2 \|
\widehat{\mathcal{R}}^+(\widehat{Z}_k) - \widehat{\mathcal{R}}^-(\widehat{Z}_k) \|_F.
\end{equation}
Since $\mathcal{L}_k$ is invertible, the definition of $W_k$ gives
\begin{equation}\label{increment_bound_initial}
\|\widehat{X}_k^+-\widehat{X}_k^-\|_F \leq
\|\mathcal{L}_k^{-1}\|_F ( \|W_k\|_F + \| \widehat{\mathcal{R}}^+(\widehat{Z}_k) - \widehat{\mathcal{R}}^-(\widehat{Z}_k) \|_F).
\end{equation}
Substituting \eqref{increment_bound_initial} into \eqref{solver_error_bound} and rearranging yields
\begin{equation}\label{W_split_bound}
\begin{aligned}
\|W_k\|_F & \leq u_s \frac{ d_1\kappa_F(\mathcal{L}_k)+d_2 }{ 1-u_s d_1\kappa_F(\mathcal{L}_k) } 
\| \widehat{\mathcal{R}}^+(\widehat{Z}_k) - \widehat{\mathcal{R}}^-(\widehat{Z}_k) \|_F\\
& \lesssim c_1(1+\bar{\alpha}_k) \|\mathcal{R}(\widehat{Z}_k)\|_F,
\end{aligned}
\end{equation}
where $c_1 = u_s \dfrac{ d_1\kappa_F(\mathcal{L}_k)+d_2 }{ 1-u_s d_1\kappa_F(\mathcal{L}_k) }$. Thus, we have 
\begin{equation*}
\|\widehat{X}_k^+-\widehat{X}_k^-\|_F
\lesssim (1+c_1)(1+\bar{\alpha}_k) \|\mathcal{L}_k^{-1}\|_F\|\mathcal{R}(\widehat{Z}_k)\|_F,
\end{equation*}
which implies that
\begin{equation*}
\| \widehat{X}_k^+ - \widehat{X}_k^- + \Delta E_k \|_F
\lesssim (1+c_1)(1+\bar{\alpha}_k) \|\mathcal{L}_k^{-1}\|_F
\|\mathcal{R}(\widehat{Z}_k)\|_F + \beta_k \|\widehat{X}_{k+1}\|_F.
\end{equation*}

Squaring both sides and applying standard algebraic inequalities, we obtain
\begin{equation*}
\| \widehat{X}_k^+ - \widehat{X}_k^- + \Delta E_k \|_F^2
\lesssim 2(1+c_1)^2(1+\bar{\alpha}_k)^2
\|\mathcal{L}_k^{-1}\|_F^2 \|\mathcal{R}(\widehat{Z}_k)\|_F^2 + 2\beta_k^2 \|\widehat{X}_{k+1}\|_F^2.
\end{equation*}

Taking Frobenius norms in \eqref{final_residual_identity} and applying the triangle inequality, we obtain

\begin{equation}\label{final_residual_bound}
\begin{aligned}
\|\mathcal{R}(\widehat{Z}_{k+1})\|_F
\leq{}& \|W_k\|_F + \|\Delta\widehat{\mathcal{R}}_k\|_F + \|\mathcal{L}_k(\Delta E_k)\|_F\\
&+ \|B\|_F^2 \| \widehat{X}_k^+ - \widehat{X}_k^- + \Delta E_k \|_F^2\\
\lesssim{}& \left[ c_1(1+\bar{\alpha}_k)+\bar{\alpha}_k \right] \|\mathcal{R}(\widehat{Z}_k)\|_F\\
&+ 2(1+c_1)^2(1+\bar{\alpha}_k)^2 \|B\|_F^2 \|\mathcal{L}_k^{-1}\|_F^2 \|\mathcal{R}(\widehat{Z}_k)\|_F^2\\
&+ \beta_k \|\mathcal{L}_k\|_F \|\widehat{X}_{k+1}\|_F + 2\beta_k^2 \|B\|_F^2 \|\widehat{X}_{k+1}\|_F^2.
\end{aligned}
\end{equation}

\end{proof}

The final residual bound in Theorem \ref{thm_res} consists of four terms with distinct roles. The first term, $\rho_k\|\mathcal{R}(\widehat{Z}_k)\|_F$, is linear in the current residual and collects the effects of the low precision inner solves, precision conversion, and residual factorization errors. 
The second term, $\mu_k\|\mathcal{R}(\widehat{Z}_k)\|_F^2$, is the quadratic Newton term induced by the Riccati nonlinearity. As the residual decreases, this quadratic term becomes higher order, so the asymptotic contraction is governed mainly by $\rho_k$ and requires $\rho_k<1$, up to the remaining update error terms. 
The last two terms are proportional to $\beta_k$ and arise from truncation and finite precision formation of the updated low-rank factor. 
They contribute to the attainable residual floor, whose size is controlled by the update precision and the truncation tolerance.

\section{Complexity Analysis}
In this section, we estimate the computational complexity of Framework \ref{IR_1}. The analysis is based on the NLR-ADI iteration used for the initial CARE approximation, the LR-ADI iteration used for the inner Lyapunov correction equations, and the two low-rank compression procedures in Algorithms \ref{res_fac1} and \ref{sol_fac}. We use the notation introduced in the previous sections: $q_k=2r_k+m+p$ is the column dimension of the residual factor matrix $F_k$, and $s_k=r_k + r_k^+ + r_k^-$ is the columns dimension of the solution factors $G_k$.

For this initial NLR-ADI stage, let $k_0$ denote the number of Newton steps and let $j_0$ denote the average number of ADI shifts per Newton step. At the $i$-th Newton step, forming $A_i=A-BK_i^T$ requires $\mathcal{O}(n^2m)$ flops if $A_i$ is formed explicitly. The dominant operation in each ADI step is the solution of shifted linear systems associated with $A_i$, whose low-rank right hand side has $p+m$ columns. Let $T_{\rm shift}^{\rm init}(p+m)$ denote the cost of one such shifted solve. Then the leading cost of the initial NLR-ADI stage is
$$ T_{\rm init} = \mathcal{O}\left( k_0 j_0T_{\rm shift}^{\rm init}(p+m) \right) + T_{\rm lr}^{\rm init}, $$
where $T_{\rm lr}^{\rm init}$ collects lower order rank dependent operations, including the formation of $A_i$ and $K_i$, residual norm evaluations, and small dense matrix operations.

The residual and solution factor decompositions only involve low-rank QR factorizations and small dense spectral decompositions. For Algorithm \ref{res_fac1}, the residual factor matrix $F_k$ has $q_k$ columns. Thus the thin QR factorization and the construction of $C_k^+$ and $C_k^-$ require $\mathcal{O}(nq_k^2)$ flops, while the spectral decomposition of $H_k$ requires $\mathcal{O}(q_k^3)$ flops. Hence
$$ T_{\rm res}(k) = \mathcal{O}\left(nq_k^2+q_k^3\right). $$
For Algorithm \ref{sol_fac}, the matrix $G_k=[Z_k,Z_k^+,Z_k^-]$ has $s_k$ columns. Its QR factorization and the construction of $Z_{k+1}$ require $\mathcal{O}(n s_k^2)$ flops, while the spectral decomposition of $P_k$ requires $\mathcal{O}(s_k^3)$ flops. Therefore,
$$ T_{\rm upd}(k) = \mathcal{O}\left(n s_k^2+ s_k^3\right). $$

For the inner Lyapunov correction equations, Framework \ref{IR_1} employs LR-ADI. At refinement step $k$, the two Lyapunov equations for $X_k^+$ and $X_k^-$ share the same coefficient matrix $A_k$. Their right hand sides are given by the residual factors $C_k^+$ and $C_k^-$. Since these factors are produced from the residual factor matrix $F_k$, the combined right hand side $[C_k^+,C_k^-]$ has at most $q_k$ columns. If $j_k$ ADI shifts are used in the inner LR-ADI solves at refinement step $k$, then the dominant cost is bounded by
$$ T_{\rm inner}(k) = \mathcal{O}\left( j_k T_{\rm shift}^{\rm inner}(q_k) \right) + T_{\rm lr}^{\rm inner}(k),$$
where $T_{\rm shift}^{\rm inner}(q_k)$ denotes the cost of one shifted solve with at most $q_k$ right hand sides, and $T_{\rm lr}^{\rm inner}(k)$ collects the lower order low-rank ADI updates and residual checks.
Assuming the total number of outer iterative refinement steps is $k$, let $j$ denote the average number of ADI shifts used by the inner LR-ADI solver at each refinement step. Let $q$ and $s$ be representative upper bounds for $q_k$ and $s_k$ over the refinement process. Then the total complexity can be summarized as
$$
T_{\rm total} = \mathcal{O}\Big( k_0 j_0T_{\rm shift}^{\rm init}(p+m) + k\big[ nq^2+q^3 + jT_{\rm shift}^{\rm inner}(q_k) + ns^2+s^3 \big] \Big).
$$

The shifted linear systems in both the initial NLR-ADI stage and the inner LR-ADI correction stage involve coefficient matrices of the form $A_k=A-BK_k^T$. To avoid explicitly forming or factorizing the low-rank modified matrix $A_k+\alpha I_n$, we use the Sherman--Morrison--Woodbury(SMW) formula to simplify the action of $(A_k+\alpha I_n)^{-T}$. Let $M_\alpha=(A+\alpha I_n)^T$. Then
$$ (A_k+\alpha I_n)^{-T} = M_\alpha^{-1} + M_\alpha^{-1}K_k (I_m-B^TM_\alpha^{-1}K_k)^{-1}  B^TM_\alpha^{-1}. $$
Since $m\ll n$, the dense matrix $I_m-B^TM_\alpha^{-1}K_k$ is only of size $m\times m$, and the cost associated with this small system is lower order. Thus, after applying the SMW formula, the dominant computation is the application of $M_\alpha^{-1}$ to block right hand sides.
The cost of applying $M_\alpha^{-1}$ depends on the linear solver used to evaluate the shifted solve terms $T_{\rm shift}^{\rm init}(p+m)$ and $T_{\rm shift}^{\rm inner}(q_k)$ introduced above. For dense direct solvers, the factorization of $M_\alpha$ costs $\mathcal{O}(n^3)$ flops, and applying the factors to a block of right hand sides costs quadratically in $n$ and linearly in the number of right-hand sides. Therefore, the shifted solve terms reduce to the dense cubic scaling. For sparse direct solvers, the cost is determined by the sparse factorization of $M_\alpha$ and the subsequent triangular solves, and depends on the sparsity pattern and fill in.
For iterative solvers, the cost depends on the number of Krylov iterations and on the cost of sparse matrix vector products and preconditioner applications.

Compared with the framework of Benner et al. \cite{benner2018}, the main difference lies in the solvers used for both the initial approximation and the correction equations. Their framework computes the initial solution by a low-rank SDA method and computes the correction term by a matrix sign function method. These procedures involve inverse-type operations and dense matrix products, so the sparsity of the original coefficient matrix is difficult to preserve once dense intermediate matrices are formed. As a result, the dominant arithmetic cost is typically of cubic order, with an $\mathcal{O}(n^2)$ memory footprint.

In contrast, in Framework \ref{IR_1}, the initial approximation is computed by NLR-ADI and the correction equations are solved by LR-ADI. Both stages are organized around shifted linear solves, and the SMW formula reduces the low-rank modified systems to shifted solves with the base matrix $A+\alpha I_n$. For dense direct solvers, the shifted solve terms still lead to cubic scaling. However, for sparse direct or iterative solvers, the cost can be substantially lower when the shifted systems preserve sparsity and the fill in or iteration counts remain moderate. This makes Framework \ref{IR_1} more suitable for large-scale sparse CAREs in both computational cost and storage than methods based on dense inverse-type operations.

\section{Numerical experiments}

In all experiments, Framework \ref{IR_1} is implemented with NLR-ADI for the initial CARE approximation and LR-ADI for the inner Lyapunov correction equations. The shift parameters are generated dynamically from Krylov subspaces. The quality of the computed solution factors is assessed by the relative Frobenius norm residual of \eqref{CARE}:
$$
\text{Res} = \dfrac{\|\mathcal{R}\|_F}{2\|A\|_F\|Z^T Z\|_F + \|B^T B\|_F \|Z^T Z\|_F^2 + \|C C^T\|_F}.
$$
To ensure accuracy, the residual is computed strictly in FP64. In the practical implementation, we include a stagnation detection mechanism: specifically, if the ratio $\| \mathcal{R}(\widehat{Z}_{k+1})\|_F / \| \mathcal{R}(\widehat{Z}_{k})\|_F$ exceeds 0.999, the iteration is terminated early to avoid redundant computations.

Unless otherwise stated, the precision settings are globally fixed as $u = u_c = u_r = \text{FP64}$. To investigate the impact of mixed precision, we evaluate the algorithm under two configurations for $u_s$: FP64 (full double precision) and FP32 (mixed precision). The specific parameters in our algorithm are set as follows:
\begin{itemize}
    \item $\eta_r = 10^{-4} \cdot \max|\lambda(H)|$,
    \item $\eta_s = 10 \cdot u \cdot \max|\lambda(P)|$,
    \item $\tau_R = 10^{-14}$ and $\tau_L = 10^{-12}$ for double precision, with $\tau_L = 10^{-5}$ for single precision (where $\tau_L$ denotes the stopping tolerance for the inner Lyapunov solver),
    \item $k_{\max} = 20, \quad j_{\max} = 50$.
\end{itemize}
Here, $| \lambda(\cdot) |$ denotes the absolute value of the eigenvalues. For reproducibility, all randomly generated matrices are produced using the fixed random seed \texttt{rng(1220)}. For each test case, these matrices are generated once and kept fixed throughout 20 repeated executions. The reported computational time is averaged over these 20 runs, whereas the residuals, iteration counts, ranks, and other reported quantities are taken from the final run. In the implementation, the identity matrix $I_n$ used with $A$ is stored in the same data type as $A$ to avoid unintended precision promotion. In the following tables, ``Iter0'' and ``Iter'' denote the iteration counts in the initial phase and the refinement phase, respectively.
The counts are reported in the form ``outer(inner)'', where the value outside the parentheses gives the number of outer iterations and the value inside gives the total number of inner iterations.

\subsection{Numerical stability analysis}

In this subsection, we evaluate the numerical stability of Framework \ref{IR_1} in MATLAB R2025a. The experiments were performed on a desktop computer equipped with a 12th-generation Intel Core i7-12700 CPU running at 2.10 GHz. Motivated by the condition $d_1\kappa_F(\mathcal{L}_k)u_s<1$ in Theorem \ref{thm_res}, we construct dense CARE test problems with varying condition numbers to examine the conditioning regime in which Framework \ref{IR_1} remains effective. The shifted linear systems in both the initial NLR-ADI stage and the inner LR-ADI correction stage are solved by dense direct solvers, and the results are compared with those obtained by the MATLAB \texttt{icare} function.
\begin{example}\label{Ex1}
$$
\begin{aligned}
V &= \text{gallery}('orthog',n), \\
A &= -V * \text{diag}(\text{logspace}(0,q,n))*V^T,\\
B &= 0.2 * \text{ones}(n,m), \\
C &= 0.1 * \text{ones}(p,n).
\end{aligned}$$
\end{example}

For this dense test problem, we set $n=1000$, $m=10$, and $p=5$. By varying the parameter $q$, we test Framework \ref{IR_1} over a range of 2-norm condition numbers and compare it with MATLAB \texttt{icare}. The results are summarized in Table \ref{Tab1}.

\begin{table}[htbp]
\caption{Performance comparison on dense CARE test problems}
\label{Tab1}
\footnotesize
\centering
\setlength{\tabcolsep}{4pt}
\begin{tabular}{c l ccccc cc}
\toprule
\multirow{2}{*}{\bf $\kappa_2(A)$}  & \multirow{2}{*}{\bf Solver / } & \multicolumn{5}{c}{\bf Metrics} & \multicolumn{2}{c}{\bf Speedup} \\
\cmidrule(lr){3-7} \cmidrule(lr){8-9}
&  \bf Precision $u_s$ & Res & Iter0 & Iter & Rank & Time(s) & \bf Ourself & \texttt{\bf icare} \\
\midrule
\multirow{3}{*}{$10^1$}
& \texttt{icare} & 6.78e-15 & $-$ & $-$ & 8 & 76.92 & $-$ & $-$ \\
& Ours (FP64)    & 5.75e-16 & 13(130) & 2(16) & 8 & 2.94 & $-$ & $\mathbf{26.16 \times}$ \\
& Ours (FP32) & 6.32e-16 & 12(64) & 3(18) & 8 & 1.08 & $\mathbf{2.72 \times}$ & $\mathbf{71.22 \times}$ \\
\midrule
\multirow{3}{*}{$10^2$}
& \texttt{icare} & 6.06e-15 & $-$ & $-$ & 12 & 72.46 & $-$ & $-$ \\
& Ours (FP64)    & 3.00e-16 & 14(240) & 2(20) & 12 & 6.64 & $-$ & $\mathbf{10.91\times}$ \\
& Ours (FP32) & 3.00e-16 & 11(63) & 3(26) & 12 & 1.41 & $\mathbf{4.71\times}$ & $\mathbf{51.39\times}$ \\
\midrule
\multirow{3}{*}{$10^3$}
& \texttt{icare} & 6.34e-15 & $-$ & $-$ & 19 & 68.72 & $-$ & $-$ \\
& Ours (FP64)    & 7.37e-15 & 11(304) & 1(0) & 16 & 8.48 & $-$ & $\mathbf{8.10\times}$ \\
& Ours (FP32) & 2.46e-16 & 12(89) & 3(28) & 18 & 2.06 & $\mathbf{4.12\times}$ & $\mathbf{33.36\times}$ \\
\midrule
\multirow{3}{*}{$10^4$}
& \texttt{icare} & 5.61e-15 & $-$ & $-$ & 81 & 70.47 & $-$ & $-$ \\
& Ours (FP64)    & 9.07e-16 & 11(471) & 1(0) & 21 & 12.86 & $-$ & $\mathbf{5.48\times}$ \\
& Ours (FP32) & 3.80e-17 & 11(111) & 3(38) & 21 & 2.90 & $\mathbf{4.43\times}$ & $\mathbf{24.30\times}$ \\
\midrule
\multirow{3}{*}{$10^5$}
& \texttt{icare} & 5.30e-15 & $-$ & $-$ & 208 & 67.65 & $-$ & $-$ \\
& Ours (FP64)    & 1.10e-16 & 12(600) & 2(19) & 25 & 16.34 & $-$ & $\mathbf{4.14\times}$ \\
& Ours (FP32) & 5.54e-17 & 11(153) & 3(52) & 38 & 4.09 & $\mathbf{4.00\times}$ & $\mathbf{16.54\times}$ \\
\midrule
\multirow{3}{*}{$10^6$}
& \texttt{icare} & 5.09e-15 & $-$ & $-$ & 341 & 64.69 & $-$ & $-$ \\
& Ours (FP64)    & 1.95e-15 & 11(550) & 3(58) & 27 & 16.07 & $-$ & $\mathbf{4.03\times}$ \\
& Ours (FP32) & 5.77e-17 & 11(379) & 4(96) & 60 & 12.85 & $\mathbf{1.25\times}$ & $\mathbf{5.03\times}$ \\
\midrule
\multirow{3}{*}{$10^7$}
& \texttt{icare} & 5.52e-15 & $-$ & $-$ & 414 & 60.83 & $-$ & $-$ \\
& Ours (FP64)    & 2.12e-15 & 10(500) & 3(53) & 30 & 14.92 & $-$ & $\mathbf{4.08\times}$ \\
& Ours (FP32) & 7.07e-16 & 11(448) & 4(123) & 175 & 14.00 & $\mathbf{1.07\times}$ & $\mathbf{4.35\times}$ \\
\midrule
\multirow{3}{*}{$10^8$}
& \texttt{icare} & 4.82e-15 & $-$ & $-$ & 534 & 60.52 & $-$ & $-$ \\
& Ours (FP64)    & 8.29e-17 & 13(650) & 2(50) & 32 & 17.21 & $-$ & $\mathbf{3.52\times}$ \\
& Ours (FP32) & $-$ & $-$ & $-$ & $-$ & $-$ & $-$ & $-$ \\
\bottomrule
\end{tabular}
\end{table}

Table \ref{Tab1} shows that, when $\kappa_2(A)\leq 10^7$, Framework \ref{IR_1} attains residuals comparable to those of \texttt{icare} in both FP64 and FP32. This indicates that the mixed precision iterative refinement process can maintain numerical stability within a certain range of condition numbers. When $\kappa_2(A)=10^8$, however, the FP32 variant fails to converge, showing that severe ill-conditioning can compromise the effectiveness of the low-precision inner solves.

In terms of runtime, Framework \ref{IR_1} is substantially faster than \texttt{icare}. In FP64, the speedup over \texttt{icare} ranges from approximately $3.52\times$ to $26.16\times$. In FP32, whenever convergence is achieved, the speedup over \texttt{icare} ranges from approximately $4.35\times$ to $71.22\times$. Moreover, FP32 further reduces the runtime relative to FP64, with speedups ranging from $2.72\times$ to $4.71\times$ when $\kappa_2(A)\leq 10^5$. This demonstrates that low precision inner solves can significantly improve computational efficiency for moderately conditioned problems.

As the condition number further increases, the speedup obtained by FP32 decreases. When $\kappa_2(A)=10^6$ and $10^7$, the FP32 speedups over FP64 drop to $1.25\times$ and $1.07\times$, respectively, while the corresponding ranks increase to 60 and 175. By contrast, the ranks of the FP64 variant remain no larger than 32, whereas the ranks associated with \texttt{icare} increase from 8 to 534 as the condition number grows. These results show that the proposed low-rank framework can effectively control the dimension of the solution factor. However, once $\kappa_F(A_k)$ exceeds the effective range of single precision, low precision errors reduce the efficiency of the FP32 corrections and lead to rank growth. Therefore, for highly ill-conditioned problems, low precision inner solves should be avoided unless appropriate preconditioning is introduced.

\subsection{Performance Analysis on GPU}
In this subsection, we evaluate the performance of the Framework \ref{IR_1} under different precision configurations. For the shifted linear systems arising in NLR-ADI and LR-ADI, we use restarted GMRES preconditioned by ILU(0). The ILU(0) factorization and triangular solves are performed on the GPU using the csrilu02 and SpSM routines from cuSPARSE, respectively. The GMRES parameters are chosen according to the inner working precision. For FP32 inner solves, the restart length, maximum number of iterations, and stopping tolerance are 30, 100, and $10^{-5}$, respectively; for FP64 inner solves, the corresponding values are 50, 200, and $10^{-10}$. We also test the framework proposed in \cite{benner2018} as a baseline for comparison.

Both frameworks are implemented with GPU acceleration. The computationally dominant large-scale linear algebra operations, including dense and sparse matrix operations, linear system solves, orthogonalization, residual evaluation, and low-rank factor updates, are mainly performed on the GPU using cuBLAS, cuSPARSE, cuSOLVER, and custom CUDA kernels. The CPU handles data loading, problem initialization, algorithm control, convergence checks, shift selection based on small projected eigenvalue problems, and a small number of scalar or index-based operations. The reported runtimes include CPU computations, GPU computations, and host device data transfer overhead.

All experiments were conducted on a single NVIDIA Tesla V100 PCIe GPU with 32 GB memory. The host machine was equipped with two Intel(R) Xeon(R) Silver 4214 CPUs running at 2.20 GHz, 128 GB RAM, and Ubuntu 22.04.5 LTS. The code was compiled with nvcc from CUDA Toolkit 12.8.

To evaluate the performance on realistic sparse structures, we construct test CAREs using matrices from the SuiteSparse Matrix Collection \cite{UF-Sparse-Matrix-Collection}. For each selected matrix $M$, we set $A=-M$, and the corresponding matrix properties are summarized in Table \ref{juz}. The matrices $B \in \mathbb{R}^{n \times 3}$ and $C \in \mathbb{R}^{2 \times n}$ are generated randomly, with $B$ normalized to satisfy $\|B\|_F=1$. The performance comparisons based on these test problems are reported in Table \ref{Ex3_Tab}.

\begin{table}[htbp]
\caption{Properties of the SuiteSparse test matrices}
\label{juz}
\footnotesize
      \begin{center}
              \begin{tabular}{c c c c c c c}
                      \hline
          Name & $n$  & $\kappa_F$  & SPD & $\min(\lambda)$ & $\max(\lambda)$ & Nonzeros \\ \hline
    \texttt{cage9}    & 3534  & 4.19e+03 & No  & 9.66e-02 & 1.00e+00 & 41,594  \\
    \texttt{fv1}         & 9604  & 1.54e+04 & Yes & 5.12e-01 &4.51e+00 & 85,264  \\
    \texttt{cage10}      & 11397 & 1.30e+04 & No  & 9.94e-02 & 1.00e+00 & 150,640 \\
     \texttt{poli\_large}  & 15575 & 4.07e+04 & No  & 5.87e-02 & 1.94e+00 & 33,033  \\
     \texttt{poli3}       & 16955 & 3.60e+04 & No  & 4.62e-01 & 1.52e+00 & 37,849  \\
    \texttt{FEM\_3D\_thermal1} & 17880 & 2.06e+05 & No  & 7.00e-03 & 5.35e+00 & 430,740 \\
    \texttt{poli4}    & 33833  & 7.86e+04 & No & 2.06e-01 & 1.71e+00 & 73,249 \\
             \hline
              \end{tabular}
      \end{center}
\end{table}

\begin{table}[htbp]
\caption{GPU performance on SuiteSparse test problems}
\label{Ex3_Tab}
\footnotesize
\centering
\setlength{\tabcolsep}{2pt}
\begin{tabular}{c l cccc cccc c}
\toprule
\multirow{2}{*}{\bf Name} & \multirow{2}{*}{\bf Method} & \multicolumn{4}{c}{$u_s = \text{FP}64$} & \multicolumn{4}{c}{$u_s = \text{FP}32$} & \multirow{2}{*}{\bf Speedup} \\
\cmidrule(lr){3-6} \cmidrule(lr){7-10}
& & Res & Iter0 & Iter & Time(s) & Res & Iter0 & Iter &  Time(s) & \\
\midrule
\multirow{3}{*}{\texttt{cage9}}
& Benner & 1.79e-17 & 9     & 1(0)  & 0.37 & 4.21e-15 & 8     & 6(15) & 0.85 & $0.44\times$ \\
& Ours & 9.62e-19 & 6(79) & 2(14)  & 1.39 & 4.16e-17 & 6(34) & 3(17)  & 0.69 & $\mathbf{2.02\times}$ \\
& {\bf Speedup} & $-$        & $-$   & $-$& $0.27\times$ & $-$ & $-$ & $-$ & $\mathbf{1.23\times}$ & $-$ \\
\midrule
\multirow{3}{*}{\texttt{fv1}}
& Benner & 3.54e-17 & 8     & 1(0)  & 3.03 & 4.75e-15 & 7     & 5(12) & 4.96 & $0.61\times$ \\
& Ours & 8.99e-15 & 4(51) & 1(0)  & 1.72 & 9.81e-17 & 4(21) & 3(14) & 1.50 & $\mathbf{1.15\times}$ \\
& {\bf Speedup} & $-$     & $-$   & $-$& $\mathbf{1.76\times}$ & $-$ & $-$ & $-$ & $\mathbf{3.31\times}$ & $-$ \\
\midrule
\multirow{3}{*}{\texttt{cage10}}
& Benner & 7.42e-18 & 9     & 1(0)  & 5.31 & 5.52e-15 & 8   & 6(15) & 9.18 & $0.58 \times$ \\
& Ours & 2.40e-15 & 6(82) & 1(0)   & 1.20 & 5.18e-18 & 6(26) & 3(15)  & 0.76 & $\mathbf{1.57\times}$ \\
& {\bf Speedup} & $-$      & $-$     & $-$& $\mathbf{4.42\times}$ & $-$ & $-$ & $-$ & $\mathbf{12.00\times}$ & $-$ \\
\midrule
\multirow{3}{*}{\texttt{poli\_large}}
& Benner & 9.32e-17 & 19    & 1(0)   & 24.49 & 9.51e-15 & 18    & 17(48) & 59.77 & $ 0.41\times$ \\
& Ours & 1.48e-15 & 6(112)& 1(0)   & 1.78  & 1.10e-16 & 6(72) & 3(19)  & 1.28  & $\mathbf{1.39\times}$ \\
& {\bf Speedup} & $-$      & $-$      & $-$& $\mathbf{13.80\times}$ & $-$ & $-$  & $-$ & $\mathbf{46.84\times}$ & $-$ \\
\midrule
\multirow{3}{*}{\texttt{poli3}}
& Benner & 9.63e-17 & 16    & 1(0)  & 26.87 & 6.34e-15 & 15  & 3(6)  & 20.65 & $1.30\times$ \\
& Ours & 1.51e-16 & 6(88) & 1(0)   & 1.38  & 6.07e-18 & 6(37) & 3(14) & 0.93  & $\mathbf{1.49 \times}$\\
& {\bf Speedup}  & $-$       & $-$   & $-$& $\mathbf{19.41\times}$ & $-$  & $-$ & $-$ & $\mathbf{22.19\times}$ & $-$ \\
\midrule
\multirow{3}{*}{\shortstack[c]{\texttt{FEM\_3D\_}\\\texttt{thermal1}}}
& Benner & 6.70e-18 & 12     & 1(0)  & 24.41 & 1.63e-12 & 10     & 20(60) & 97.47 & $0.25\times$ \\
& Ours & 8.33e-15 & 8(224) & 1(0) & 14.60 & 2.06e-16 & 8(115) & 3(32) & 9.07  & $\mathbf{1.61\times}$ \\
& {\bf Speedup}  & $-$        & $-$   & $-$& $\mathbf{1.67\times}$ & $-$ & $-$  & $-$ & $\mathbf{10.75\times}$ & $-$ \\
\midrule
\multirow{3}{*}{\texttt{poli4}}
& Benner & $-$ & $-$    & $-$  & $-$ & $-$ & $-$  & $-$ & $-$ & $-$ \\
& Ours & 7.57e-15 & 6(110) & 1(0) & 2.34 & 3.34e-18 & 6(48) & 3(17) & 1.36  & $\mathbf{1.72\times}$ \\
& {\bf Speedup}  & $-$        & $-$   & $-$& $-$ & $-$ & $-$  & $-$ & $-$ & $-$ \\
\bottomrule
\end{tabular}
\end{table}

The results in Table \ref{Ex3_Tab} indicate that both methods attain small and comparable residuals in the FP64 runs, while Framework \ref{IR_1} continues to deliver accurate solutions when FP32 is used for the inner solves. In contrast, Benner et al.'s method exhibits noticeably larger residuals for several test problems in FP32. In terms of runtime, Framework \ref{IR_1} is faster than Benner et al.'s method for most test problems.
With FP64 inner solves, except for the case constructed from the relatively small scale matrix \texttt{cage9}, Framework \ref{IR_1} achieves speedups ranging from approximately $1.67\times$ to $19.41\times$. When the inner solves are performed in FP32, Framework \ref{IR_1} is faster for all test problems for which Benner et al.'s method is executable, with speedups ranging from $1.23\times$ to $46.84\times$. Moreover, Framework \ref{IR_1} benefits consistently from the use of FP32 inner solves, with speedups between $1.15\times$ and $2.02\times$ over its FP64 counterpart.
For the larger test problem constructed from \texttt{poli4}, Benner et al.'s method runs out of GPU memory, whereas Framework \ref{IR_1} remains executable and achieves a $1.72\times$ speedup from FP32 inner solves. Overall, these observations indicate that the proposed framework exploits sparsity more effectively, has lower memory requirements, and provides more reliable acceleration in mixed precision.

The observed performance difference can be attributed mainly to the different choices of the initial solver and the inner Lyapunov solver in the two Newton-type frameworks. Benner et al.'s method uses LR-SDA as the initial solver and the matrix sign function method as the inner Lyapunov solver, whereas Framework \ref{IR_1} uses the NLR-ADI and LR-ADI algorithms, respectively. The matrix sign function method involves dense matrix operations and tends to lose the benefits of the original sparse structure, leading to higher memory usage and computational cost for large-scale sparse problems.
In contrast, the dominant computations in NLR-ADI/LR-ADI are shifted linear solves, which can be performed by preconditioned GMRES while preserving and exploiting sparsity. Therefore, Framework \ref{IR_1} is more suitable for large-scale sparse CAREs.

\begin{example}
We consider the following artificial test problem constructed from a Toeplitz matrix:
$$
A =
\left[\begin{array}{cccccc}
-2.8 & -1 & -1 & -1 & 0 & \cdots \\
1 & -2.8 & -1 & -1 & -1 & \ddots \\
0 & 1 & -2.8 & -1 & -1 & \ddots \\
\vdots & \ddots & \ddots & \ddots & \ddots & \ddots \\
0 & \cdots & 0 & 1 & -2.8 & -1 \\
0 & \cdots & 0 & 0 & 1 & -2.8
\end{array}\right]_{n\times n}.
$$
and $B\in \mathbb{R}^{n \times 10}$, $C\in \mathbb{R}^{5 \times n}$ are random matrices, with $\|B\|_F=1$.
\end{example}

We vary the matrix dimension $n$ to evaluate the accuracy stability, iteration behavior, and runtime growth of Framework \ref{IR_1} for large-scale problems. The results are reported in Table \ref{tab:toeplitz}.

\begin{table}[htbp]
\caption{GPU performance on Toeplitz test problems}
\label{tab:toeplitz}
\footnotesize
\centering
\setlength{\tabcolsep}{2pt}
\begin{tabular}{c cccc cccc c}
\toprule
\multirow{2}{*}{\bf $n$}
& \multicolumn{4}{c}{$u_s = \text{FP}64$}
& \multicolumn{4}{c}{$u_s = \text{FP}32$}
& \multirow{2}{*}{\bf Speedup} \\
\cmidrule(lr){2-5} \cmidrule(lr){6-9}
& Res & Iter0 & Iter  & Time(s)
& Res & Iter0 & Iter  & Time(s) & \\
\midrule
32768  & 2.64e-16 & 4(128) & 1(0) & 24.64 & 4.46e-17 & 4(40) & 3(26) & 16.01 & $\mathbf{1.54\times}$ \\
49152  & 3.83e-16 & 4(125) & 1(0) & 35.58 & 2.24e-17 & 4(40) & 3(26) & 23.76 & $\mathbf{1.50\times}$ \\
65536  & 1.51e-16 & 4(128) & 1(0) & 48.18 & 2.32e-17 & 4(40) & 3(26) & 31.68 & $\mathbf{1.52\times}$ \\
81920  & 2.20e-16 & 4(125) & 1(0) & 58.17 & 1.11e-17 & 4(40) & 3(26) & 39.76 & $\mathbf{1.46\times}$ \\
98304  & 1.86e-16 & 4(125) & 1(0) & 69.46 & 1.47e-17 & 4(40) & 3(26) & 47.56 & $\mathbf{1.46\times}$ \\
114688 & 1.77e-16 & 4(125) & 1(0) & 80.79 & 1.11e-17 & 4(40) & 3(27) & 56.46 & $\mathbf{1.43\times}$ \\
131072 & 1.64e-16 & 4(125) & 1(0) & 92.07 & 9.32e-18 & 4(40) & 3(26) & 63.44 & $\mathbf{1.45\times}$ \\
\bottomrule
\end{tabular}
\end{table}

As shown in Table \ref{tab:toeplitz}, the residuals and iteration counts remain stable under both FP64 and FP32 as the matrix dimension $n$ increases, indicating that the proposed framework maintains good accuracy and convergence behavior for this structured test problem. Meanwhile, the runtime increases gradually with $n$, while FP32 is consistently faster than FP64 for all tested dimensions, with speedups of about $1.5\times$ across all tested dimensions.
These results show that mixed precision inner solves provide a moderate but stable computational benefit without compromising the stability of the algorithm.

\section{Conclusions}

In this paper, we have proposed a Newton-based mixed precision iterative refinement framework for large-scale sparse continuous-time algebraic Riccati equations. The framework computes the initial approximation and the inner Lyapunov correction equations in lower precision, while residual evaluation and solution updates are performed in higher precision. The low-rank factor decomposition and truncation strategies enable indefinite residuals and Newton correction terms to be treated in factored form, while maintaining positive semidefiniteness, controlling rank growth, and avoiding the explicit formation of large-scale dense matrices.
A first-order rounding error analysis relates the stability of the mixed precision refinement process to the conditioning of the Lyapunov operator and the unit roundoff of the lower precision inner solves. 
Numerical experiments confirm this conditioning effect and show that, for problems in the admissible conditioning regime, the proposed framework achieves clear speedups over the full double precision implementation while maintaining comparable accuracy. For large-scale sparse test problems, the ADI-based realization exploits sparsity effectively and demonstrates good computational efficiency on GPU platforms. 

Future work will investigate the implementation and optimization of the proposed framework on hardware platforms with native support for lower precision formats, such as FP16 and Bfloat16, as well as acceleration units such as Tensor Cores. Another direction is to investigate solver selection strategies for different classes of coefficient matrices, including the choice of initial solvers, inner Lyapunov solvers, and preconditioners for high condition number problems.

\section*{Acknowledgments}
The numerical experiments were performed on the High Performance Computing Platform of Xiangtan University. The authors gratefully acknowledge its computational resources and technical support.

\bibliographystyle{siamplain}
\bibliography{references}

\begin{thebibliography}{10}

\bibitem{mxiedzongsu2021}
{\sc A.~Abdelfattah, H.~Anzt, A.~Ayala, E.~Boman, E.~Carson, S.~Cayrols, T.~Cojean, J.~Dongarra, R.~Falgout, M.~Gates, et~al.}, {\em Advances in mixed precision algorithms: 2021 edition}, Tech. Report LLNL-TR-825909, Lawrence Livermore National Laboratory (LLNL), Livermore, CA (United States), 2021, \url{https://doi.org/10.2172/1814677}.

\bibitem{mxjj2005}
{\sc A.~Antoulas}, {\em Approximation of Large-Scale Dynamical Systems}, vol.~6, Society for Industrial and Applied Mathematics, 2005, \url{https://doi.org/10.1137/1.9780898718713}.

\bibitem{sign1998}
{\sc Z.~Bai and J.~Demmel}, {\em Using the matrix sign function to compute invariant subspaces}, SIAM Journal on Matrix Analysis and Applications, 19 (1998), pp.~205--225, \url{https://doi.org/10.1137/S0895479896297719}.

\bibitem{radi2018}
{\sc P.~Benner, Z.~Bujanovi{\'c}, P.~K{\"u}rschner, and J.~Saak}, {\em Radi: a low-rank adi-type algorithm for large scale algebraic riccati equations}, Numerische Mathematik, 138 (2018), pp.~301--330, \url{https://doi.org/10.1007/s00211-017-0907-5}.

\bibitem{benner2017}
{\sc P.~Benner, E.~Dufrechou, P.~Ezzatti, and A.~Rem{\'o}n}, {\em Studying mixed precision techniques for the solution of algebraic riccati equations}, in 2nd Workshop on Power-Aware Computing, Ringberg Castle, 2017, pp.~1--6, \url{https://doi.org/10.5281/zenodo.815496}.

\bibitem{benner2018}
{\sc P.~Benner, E.~Dufrechou, P.~Ezzatti, A.~Rem{\'o}n, and J.~Saak}, {\em A gpu-aware mixed-precision solver for low-rank algebraic riccati equations}, Concurrency and Computation: Practice and Experience, 31 (2019), p.~e4462, \url{https://doi.org/10.1002/cpe.4462}.

\bibitem{INRADI2016}
{\sc P.~Benner, M.~Heinkenschloss, J.~Saak, and H.~K. Weichelt}, {\em An inexact low-rank newton--adi method for large-scale algebraic riccati equations}, Applied Numerical Mathematics, 108 (2016), pp.~125--142, \url{https://doi.org/10.1016/j.apnum.2016.05.006}.

\bibitem{liu2025lyapunov}
{\sc P.~Benner and X.~Liu}, {\em Mixed-precision iterative refinement for low-rank lyapunov equations}, 2025, \url{https://arxiv.org/abs/2510.02126}.

\bibitem{Krylov2024}
{\sc C.~Bertram and H.~Fa{\ss}bender}, {\em A class of petrov--galerkin krylov methods for algebraic riccati equations}, Electronic Transactions on Numerical Analysis, 62 (2024), pp.~138--162, \url{https://doi.org/10.1553/etna_vol62s138}.

\bibitem{mpqr2025}
{\sc A.~Buttari, T.~Mary, and A.~Pacteau}, {\em Truncated qr factorization with pivoting in mixed precision}, SIAM Journal on Scientific Computing, 47 (2025), pp.~B382--B401, \url{https://doi.org/10.1137/24M1644705}.

\bibitem{carson2018}
{\sc E.~Carson and N.~J. Higham}, {\em Accelerating the solution of linear systems by iterative refinement in three precisions}, SIAM Journal on Scientific Computing, 40 (2018), pp.~A817--A847, \url{https://doi.org/10.1137/17M1140819}.

\bibitem{carson2020ls}
{\sc E.~Carson, N.~J. Higham, and S.~Pranesh}, {\em Three-precision gmres-based iterative refinement for least squares problems}, SIAM Journal on Scientific Computing, 42 (2020), pp.~A4063--A4083, \url{https://doi.org/10.1137/20M1316822}.

\bibitem{chu2005doubling}
{\sc E.~K.-W. Chu, H.-Y. Fan, and W.-W. Lin}, {\em A structure-preserving doubling algorithm for continuous-time algebraic riccati equations}, Linear Algebra and its Applications, 396 (2005), pp.~55--80, \url{https://doi.org/10.1016/j.laa.2004.10.010}.

\bibitem{UF-Sparse-Matrix-Collection}
{\sc T.~A. Davis and Y.~Hu}, {\em The university of florida sparse matrix collection}, ACM Transactions on Mathematical Software, 38 (2011), pp.~1--25, \url{https://doi.org/10.1145/2049662.2049663}.

\bibitem{liu2025sylvester}
{\sc A.~Dmytryshyn, M.~Fasi, N.~J. Higham, and X.~Liu}, {\em Mixed-precision algorithms for solving the sylvester matrix equation}, 2025, \url{https://arxiv.org/abs/2503.03456}.

\bibitem{EbArnoldi2009}
{\sc M.~Heyouni and K.~Jbilou}, {\em An extended block arnoldi algorithm for large-scale solutions of the continuous-time algebraic riccati equation}, Electronic Transactions on Numerical Analysis, 33 (2009), pp.~53--62.

\bibitem{mxiedzongsu2022}
{\sc N.~J. Higham and T.~Mary}, {\em Mixed precision algorithms in numerical linear algebra}, Acta Numerica, 31 (2022), pp.~347--414, \url{https://doi.org/10.1017/S0962492922000022}.

\bibitem{IEEE}
{\sc {IEEE}}, {\em {IEEE Standard for Floating-Point Arithmetic}}.
\newblock {IEEE Std 754-2019 (Revision of IEEE 754-2008)}, July 2019, \url{https://doi.org/10.1109/IEEESTD.2019.8766229}.

\bibitem{mxjj1983}
{\sc E.~Jonckheere and L.~Silverman}, {\em A new set of invariants for linear systems--application to reduced order compensator design}, IEEE Transactions on Automatic Control, 28 (1983), pp.~953--964, \url{https://doi.org/10.1109/TAC.1983.1103159}.

\bibitem{dizhi2016}
{\sc P.~K{\"u}rschner}, {\em Efficient low-rank solution of large-scale matrix equations}, PhD thesis, Otto-von-Guericke-Universit{\"a}t Magdeburg, Aachen, 2016, \url{https://www.shaker.eu/shop/978-3-8440-4385-3}.

\bibitem{schur1979}
{\sc A.~Laub}, {\em A schur method for solving algebraic riccati equations}, IEEE Transactions on Automatic Control, 24 (1979), pp.~913--921, \url{https://doi.org/10.1109/TAC.1979.1102178}.

\bibitem{subspace2015}
{\sc Y.~Lin and V.~Simoncini}, {\em A new subspace iteration method for the algebraic riccati equation}, Numerical Linear Algebra with Applications, 22 (2015), pp.~26--47, \url{https://doi.org/10.1002/nla.1936}.

\bibitem{zuiyoukz2001}
{\sc A.~Locatelli}, {\em Optimal Control: An Introduction}, Birkh\"{a}user Basel, 1~ed., 2001.

\bibitem{projectNK2019}
{\sc D.~Palitta}, {\em The projected newton--kleinman method for the algebraic riccati equation}, 2019, \url{https://arxiv.org/abs/1901.10199}.

\bibitem{prikopa2013}
{\sc K.~E. Prikopa and W.~N. Gansterer}, {\em On mixed precision iterative refinement for eigenvalue problems}, Procedia Computer Science, 18 (2013), pp.~2647--2650, \url{https://doi.org/10.1016/j.procs.2013.06.002}.

\bibitem{sign1980}
{\sc J.~D. Roberts}, {\em Linear model reduction and solution of the algebraic riccati equation by use of the sign function}, International Journal of Control, 32 (1980), pp.~677--687, \url{https://doi.org/10.1080/00207178008922881}.

\bibitem{arXiv2026Lyapunov}
{\sc J.~Schulze and J.~Saak}, {\em Toward a mixed-precision adi method for lyapunov equations}, Proceedings in Applied Mathematics and Mechanics, 26 (2026), p.~e70145, \url{https://doi.org/10.1002/pamm.70145}.

\bibitem{zhang2024lowrank}
{\sc J.~Zhang and W.~Xun}, {\em Low-rank generalized alternating direction implicit iteration method for solving matrix equations}, Computational and Applied Mathematics, 43 (2024), p.~256, \url{https://doi.org/10.1007/s40314-024-02774-8}.

\end{thebibliography}

\end{document}